%2multibyte Version: 5.50.0.2890 CodePage: 1254
%------------------------------------------------------------------------------
% Here please write the date of submission of paper or its revisions:
%------------------------------------------------------------------------------

\documentclass[12pt, reqno]{amsart}%
\usepackage{amsmath, amsthm, amscd, amsfonts, amssymb, graphicx, color}
\usepackage[bookmarksnumbered, colorlinks, plainpages]{hyperref}%
\usepackage{amsmath}%
\setcounter{MaxMatrixCols}{30}%
\usepackage{amsfonts}%
\usepackage{amssymb}%
\usepackage{graphicx}
%TCIDATA{OutputFilter=latex2.dll}
%TCIDATA{Version=5.50.0.2890}
%TCIDATA{Codepage=1254}
%TCIDATA{LastRevised=Monday, November 26, 2012 22:45:01}
%TCIDATA{<META NAME="GraphicsSave" CONTENT="32">}
%TCIDATA{<META NAME="SaveForMode" CONTENT="1">}
%TCIDATA{BibliographyScheme=Manual}
%BeginMSIPreambleData
\providecommand{\U}[1]{\protect \rule{.1in}{.1in}}
%EndMSIPreambleData
\textheight 22.5truecm \textwidth 14.5truecm
\setlength{\oddsidemargin}{0.35in}\setlength{\evensidemargin}{0.35in}
\setlength{\topmargin}{-.5cm}
\newtheorem{theorem}{Theorem}[section]

\newtheorem{proposition}[theorem]{Proposition}
\newtheorem{corollary}[theorem]{Corollary}
\theoremstyle{definition}
\newtheorem{definition}[theorem]{Definition}

\theoremstyle{remark}

\numberwithin{equation}{section}
\begin{document}
\title[Mannheim curves in three dimensional Lie groups]{On Mannheim partner curves in three dimensional Lie groups}
\author{\.{I}sma\.{I}l G\"{o}k}
\address{Ankara University, Faculty of Science, Department of Mathematics, 06100,
Tando\~{g}an, Ankara, Turkey}
\email{igok@science.ankara.edu.tr}
\author{O. Zek\.{I} Okuyucu}
\address{Bilecik \c{S}eyh Edebali University, Faculty of Science and Arts, Department
of Mathematics, Bilecik, Turkey}
\email{osman.okuyucu@bilecik.edu.tr}
\author{Nejat Ekmekc\.{I}}
\address{Ankara University, Faculty of Science, Department of Mathematics, 06100,
Tando\~{g}an, Ankara, Turkey}
\email{nekmekci@science.ankara.edu.tr}
\author{Yusuf Yayl\i}
\address{Ankara University, Faculty of Science, Department of Mathematics, 06100,
Tando\~{g}an, Ankara, Turkey}
\email{yayli@science.ankara.edu.tr}
\thanks{This paper is in final form and no version of it will be submitted for
publication elsewhere.}
\date{August 14, 2012}
\subjclass[2000]{Primary 53A04; Secondary 22E15}
\keywords{Mannheim curves, Lie groups.}

\begin{abstract}
In this paper, we define Mannheim partner curves in a three dimensional Lie
group $G$ with a bi-invariant metric. And then the main result in this paper
is given as (Theorem \ref{teo 3.2}): A curve $\alpha:I\subset
\mathbb{R\rightarrow}G$ with the Frenet apparatus $\left \{  T,N,B,\varkappa
,\tau \right \}  $ is a Mannheim partner curve if and only if%
\[
\lambda \varkappa \left(  1+H^{2}\right)  =1
\]
where $\lambda$ is constants and $H$ is the harmonic curvature
function of the curve $\alpha.$

\end{abstract}
\maketitle

\setcounter{page}{1}

\setcounter{page}{1}

%\dedicatory{This paper is dedicated to Professor ABCD}

\setcounter{page}{1}

%\dedicatory{This paper is dedicated to Professor ABCD}

\section{Introduction}

In the classical diferential geometry of curves, J. Bertrand studied curves in
Euclidean 3-space whose principal normals are the principal normals of another
curve. In (see \cite{bert}) he showed that a necessary and sufficient
condition for the existence of such a second curve is that a linear
relationship with constant coefficients shall exist between the first and
second curvatures of the given original curve. In other word, if we denote
first and second curvatures of a given curve by $k_{1}$ and $k_{2}$
respectively, then for $\lambda,\mu$ $\in \mathbb{R}$ we have $\lambda
k_{1}+\mu k_{2}=1$. Since the time of Bertrand's paper, pairs of curves of
this kind have been called \textit{Conjugate Bertrand} \textit{Curves}, or
more commonly \textit{Bertrand Curves} (see \cite{kuh})$.$

Another kind of associated curve whose principal normal vector field is the
binormal vector field of another curve.is called Mannheim curve. Mannheim
partner curves was studied by Liu and Wang (see \cite{liu}) in Euclidean $3-$
space and in the Minkowski $3-$space. After these papers lots of papers were
published about Mannheim curves in Euclidean $3-$space, Minkowski $3-$space,
dual $3-$space and Galilean spaces (see \cite{orbay, karacan, handan, sidika,
tosun}). Matsuda and Yorozu \cite{matsuda} gave a definition of generalized
Mannheim curve in Euclidean $4-$space. They show some characterizations and
examples of generalized Mannheim curves. Ersoy \emph{et.al. }gave a definition
of generalized Mannheim curve in Minkowski $4-$space.

%\dedicatory{This paper is dedicated to Professor ABCD}

The degenarete semi-Riemannian geometry of Lie group is studied by
\c{C}\"{o}ken and \c{C}ift\c{c}i \cite{coken}. Moreover, they obtanied a
naturally reductive homogeneous semi-Riemannian space using the Lie group.
Then \c{C}ift\c{c}i \cite{ciftci} defined general helices in three dimensional
Lie groups with a bi-invariant metric and obtained a generalization of
Lancret's theorem and gave a relation between the geodesics of the so-called
cylinders and general helices.

%\dedicatory{This paper is dedicated to Professor ABCD}

Recently, \textit{Izumiya and Takeuchi} \cite{izu} have introduced the concept
of slant helix in Euclidean $3$-space.\ A slant helix in Euclidean space
$\mathbb{E}^{3}$ was defined by the property that its principal normal vector
field makes a constant angle with a fixed direction. Also, Izumiya and
Takeuchi showed that $\alpha$ is a slant helix if and only if the geodesic
curvature of spherical image of principal normal indicatrix $\left(  N\right)
$ of a space curve $\alpha$
\[
\sigma_{N}\left(  s\right)  =\left(  \frac{\varkappa^{2}}{\left(
\varkappa^{2}+\tau^{2}\right)  ^{3/2}}\left(  \frac{\tau}{\varkappa}\right)
^{\prime}\right)  \left(  s\right)
\]
is a constant function .

%\dedicatory{This paper is dedicated to Professor ABCD}

Harmonic curvature functions were defined earlier by \"{O}zdamar and Hac\i
saliho\u{g}lu \cite{ozdamar}. Recently, many studies have been reported on
generalized helices and slant helices using the harmonic curvatures in
Euclidean spaces and Minkowski spaces \cite{gok, camci, kulahci}. Then,
Okuyucu et al. \cite{zeki} defined slant helices in three dimensional Lie
groups with a bi-invariant metric and obtained some characterizations using
their harmonic curvature function.

%\dedicatory{This paper is dedicated to Professor ABCD}

In this paper, first of all, we define Mannheim partner curves in a three
dimensional Lie group $G$ with a bi-invariant metric and we obtain the
necessary and sufficient conditions for the Mannheim partner curves in a three
dimensional Lie group $G.$

%\dedicatory{This paper is dedicated to Professor ABCD}

\section{Preliminaries}

Let $G$ be a Lie group with a bi-invariant metric $\left \langle \text{
},\right \rangle $ and $D$ be the Levi-Civita connection of Lie group $G.$ If
$\mathfrak{g}$ denotes the Lie algebra of $G$ then we know that $\mathfrak{g}
$ is issomorphic to $T_{e}G$ where $e$ is neutral element of $G.$ If
$\left \langle \text{ },\right \rangle $ is a bi-invariant metric on $G$ then we
have%
\begin{equation}
\left \langle X,\left[  Y,Z\right]  \right \rangle =\left \langle \left[
X,Y\right]  ,Z\right \rangle \label{2-1}%
\end{equation}
and
\begin{equation}
D_{X}Y=\frac{1}{2}\left[  X,Y\right] \label{2-2}%
\end{equation}
for all $X,Y$ and $Z\in \mathfrak{g}.$

%\dedicatory{This paper is dedicated to Professor ABCD}

Let $\alpha:I\subset \mathbb{R\rightarrow}G$ be an arc-lenghted regular curve
and $\left \{  X_{1},X_{2,}...,X_{n}\right \}  $ be an orthonormal basis of
$\mathfrak{g}.$ In this case, we write that any two vector fields $W$ and $Z$
along the curve $\alpha \ $as $W=\sum_{i=1}^{n}w_{i}X_{i}$ and $Z=\sum
_{i=1}^{n}z_{i}X_{i}$ where $w_{i}:I\rightarrow \mathbb{R}$ and $z_{i}%
:I\rightarrow \mathbb{R}$ are smooth functions. Also the Lie bracket of two
vector fields $W$ and $Z$ is given
\[
\left[  W,Z\right]  =\sum_{i=1}^{n}w_{i}z_{i}\left[  X_{i},X_{j}\right]
\]
and the covariant derivative of $W$ along the curve $\alpha$ with the notation
$D_{\alpha^{\shortmid}}W$ is given as follows%
\begin{equation}
D_{\alpha^{\shortmid}}W=\overset{\cdot}{W}+\frac{1}{2}\left[  T,W\right]
\label{2-3}%
\end{equation}
where $T=\alpha^{\prime}$ and $\overset{\cdot}{W}=\sum_{i=1}^{n}\overset
{\cdot}{w_{i}}X_{i}$ or $\overset{\cdot}{W}=\sum_{i=1}^{n}\frac{dw}{dt}X_{i}.$
Note that if $W$ is the left-invariant vector field to the curve $\alpha$ then
$\overset{\cdot}{W}=0$ (see for details \cite{crouch}).

%\dedicatory{This paper is dedicated to Professor ABCD}

Let $G$ be a three dimensional Lie group and $\left(  T,N,B,\varkappa
,\tau \right)  $ denote the Frenet apparatus of the curve $\alpha$. Then the
Serret-Frenet formulas of the curve $\alpha$ satisfies:%

\[
D_{T}T=\varkappa N\text{, \  \  \ }D_{T}N=-\varkappa T+\tau B\text{,
\  \  \ }D_{T}B=-\tau N
\]
where $D$ is Levi-Civita connection of Lie group $G$ and $\varkappa
=\overset{\cdot}{\left \Vert T\right \Vert }.$

%\dedicatory{This paper is dedicated to Professor ABCD}

\begin{definition}
\label{tan 2.1}Let $\alpha:I\subset \mathbb{R\rightarrow}G$ be a parametrized
curve. Then $\alpha$ is called a \emph{general helix} if it makes a constant
angle with a left-invariant vector field $X$. That is,%
\[
\left \langle T(s),X\right \rangle =\cos \theta \text{ for all }s\in I,
\]
for the left-invariant vector field $X\in g$ is unit length and $\theta$ is a
constant angle between $X$ and $T$, which is the tangent vector field of the
curve $\alpha$ (see \cite{ciftci}).
\end{definition}

%\dedicatory{This paper is dedicated to Professor ABCD}

\begin{definition}
\label{tan 2.2}Let $\alpha:I\subset \mathbb{R\rightarrow}G$ be a parametrized
curve with the Frenet apparatus $\left(  T,N,B,\varkappa,\tau \right)  $ then
\begin{equation}
\tau_{G}=\frac{1}{2}\left \langle \left[  T,N\right]  ,B\right \rangle
\label{2-4}%
\end{equation}
or
\[
\tau_{G}=\frac{1}{2\varkappa^{2}\tau}\overset{\cdot \cdot \text{
\  \  \  \  \  \  \  \ }\cdot}{\left \langle T,\left[  T,T\right]  \right \rangle
}+\frac{1}{4\varkappa^{2}\tau}\overset{\text{ \  \ }\cdot}{\left \Vert \left[
T,T\right]  \right \Vert ^{2}}%
\]
(see \cite{ciftci}).
\end{definition}

%\dedicatory{This paper is dedicated to Professor ABCD}

\begin{proposition}
\label{prop 2.1}Let $\alpha:I\subset \mathbb{R\rightarrow}G$ be an arc length
parametrized curve with the Frenet apparatus $\left \{  T,N,B\right \}  $. Then
the following equalities%
\begin{align*}
\left[  T,N\right]   &  =\left \langle \left[  T,N\right]  ,B\right \rangle
B=2\tau_{G}B\\
\left[  T,B\right]   &  =\left \langle \left[  T,B\right]  ,N\right \rangle
N=-2\tau_{G}N
\end{align*}
hold \cite{zeki}.
\end{proposition}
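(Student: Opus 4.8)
The plan is to exploit the single structural feature of a bi-invariant metric, namely the ad-invariance identity \eqref{2-1}, to pin down the \emph{direction} of each bracket inside the orthonormal Frenet frame $\{T,N,B\}$, and then to read off the \emph{coefficient} from the definition \eqref{2-4} of $\tau_{G}$. First I would record the consequence of \eqref{2-1} that the trilinear form $\omega(X,Y,Z)=\langle [X,Y],Z\rangle$ is totally skew-symmetric. Indeed, \eqref{2-1} together with the symmetry of the metric yields the cyclic identity $\langle [X,Y],Z\rangle=\langle [Y,Z],X\rangle$, while antisymmetry of the Lie bracket gives $\langle [X,Y],Z\rangle=-\langle [Y,X],Z\rangle$; these two symmetries together force $\omega$ to change sign under every transposition of its arguments. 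In particular $\omega$ vanishes whenever two of its arguments coincide, so $\langle [X,Y],X\rangle=0$ for all $X,Y\in\mathfrak{g}$.

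Next I would decompose $[T,N]$ in the frame $\{T,N,B\}$. Setting $X=Y=T$ and $Z=N$ in \eqref{2-1} gives $\langle T,[T,N]\rangle=\langle [T,T],N\rangle=0$, while the repeated-argument property gives $\langle N,[T,N]\rangle=\omega(T,N,N)=0$ at once. Hence $[T,N]$ is orthogonal to both $T$ and $N$, so in the three-dimensional tangent space it must be a scalar multiple of $B$, that is $[T,N]=\langle [T,N],B\rangle B$. Comparing with \eqref{2-4} we have $\langle [T,N],B\rangle=2\tau_{G}$, which establishes the first line. The identical argument applied to $[T,B]$, using $\langle T,[T,B]\rangle=0$ from \eqref{2-1} and $\langle B,[T,B]\rangle=\omega(T,B,B)=0$, shows that $[T,B]=\langle [T,B],N\rangle N$.

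It then remains to identify $\langle [T,B],N\rangle$ with $-2\tau_{G}$, and this is where the skew-symmetry of $\omega$ does the real work. The coefficient $\langle [T,B],N\rangle$ is exactly $\omega(T,B,N)$, and the triple $(T,B,N)$ differs from $(T,N,B)$ by the transposition of its last two slots; total skew-symmetry therefore gives $\omega(T,B,N)=-\omega(T,N,B)=-2\tau_{G}$. This yields $[T,B]=-2\tau_{G}N$ and completes the proof. I expect the only delicate point to be keeping the signs straight in this final permutation step; every other step is a direct substitution into \eqref{2-1}.
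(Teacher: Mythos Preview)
Your argument is correct. The paper itself does not give a proof of Proposition~\ref{prop 2.1}; it merely quotes the result from \cite{zeki}. So there is nothing in the present paper to compare against, and your write-up would in fact supply the missing justification.

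The one point worth making explicit is that $T$, $N$, $B$ are vector fields along $\alpha$, not left-invariant vector fields, so identity \eqref{2-1} is not literally applicable to them as stated. However, at each fixed $s$ one may expand $T(s)$, $N(s)$, $B(s)$ in the orthonormal basis $\{X_1,X_2,X_3\}$ of $\mathfrak{g}$, and since both $[\,\cdot\,,\,\cdot\,]$ and $\langle\,\cdot\,,\,\cdot\,\rangle$ are bilinear, the ad-invariance identity \eqref{2-1} extends pointwise to arbitrary linear combinations. With that remark your skew-symmetry argument for $\omega(X,Y,Z)=\langle[X,Y],Z\rangle$ goes through verbatim, and the rest---the orthogonality of $[T,N]$ to $T$ and $N$, the identification of the $B$-coefficient with $2\tau_G$ via \eqref{2-4}, and the sign flip $\omega(T,B,N)=-\omega(T,N,B)$---is exactly the standard proof one finds in \cite{zeki}.
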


\begin{definition}
\label{tan 2.3}Let $\alpha:I\subset \mathbb{R\rightarrow}G$ be an arc length
parametrized curve. Then $\alpha$ is called a \emph{slant helix} if its
principal normal vector field makes a constant angle with a left-invariant
vector field $X$ which is unit length. That is,%
\[
\left \langle N(s),X\right \rangle =\cos \theta \text{ for all }s\in I,
\]
where $\theta \neq \frac{\pi}{2}$ is a constant angle between $X$ and $N$ which
is the principal normal vector field of the curve $\alpha$ (see \cite{zeki}).
\end{definition}

\begin{definition}
\label{tan 2.4}Let $\alpha:I\subset \mathbb{R\rightarrow}G$ be an arc length
parametrized curve with the Frenet apparatus $\left \{  T,N,B,\varkappa
,\tau \right \}  .$ Then the \emph{harmonic curvature function} of the curve
$\alpha$ is defined by%
\[
H=\dfrac{\tau-\tau_{G}}{\varkappa}%
\]
where $\tau_{G}=\frac{1}{2}\left \langle \left[  T,N\right]  ,B\right \rangle $
(see \cite{zeki})$.$

\begin{theorem}
\label{teo 2.1}Let $\alpha:I\subset \mathbb{R\rightarrow}G$ be a parametrized
curve with the Frenet apparatus $\left(  T,N,B,\varkappa,\tau \right)  $. If
the curve $\alpha$ is a general helix, if and only if,%
\[
\tau=c\varkappa+\tau_{G}%
\]
where c is a constant (see \cite{ciftci}) or using the definition of the
harmonic curvature function of the curve $\alpha$ (see \cite{zeki}) is
constant function.
\end{theorem}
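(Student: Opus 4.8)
The plan is to establish a chain of equivalences: $\alpha$ is a general helix $\iff$ $\tau=c\varkappa+\tau_{G}$ for some constant $c$ $\iff$ the harmonic curvature function $H$ is constant. The second equivalence is immediate from Definition \ref{tan 2.4}, since $H=\dfrac{\tau-\tau_{G}}{\varkappa}$ equals the constant $c$ exactly when $\tau=c\varkappa+\tau_{G}$; so all the work is in the first equivalence. Throughout I will use the Serret--Frenet formulas, the covariant-derivative formula $D_{\alpha'}W=\dot W+\tfrac12[T,W]$ from (\ref{2-3}), the bi-invariance identity (\ref{2-1}), and the bracket formulas $[T,N]=2\tau_{G}B$, $[T,B]=-2\tau_{G}N$ of Proposition \ref{prop 2.1}.

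For the forward implication I would start from $\langle T,X\rangle=\cos\theta$ with $X$ a unit left-invariant vector field. Differentiating along $\alpha$ and using $\dot X=0$, hence $D_{T}X=\tfrac12[T,X]$, together with $\langle T,[T,X]\rangle=\langle[T,T],X\rangle=0$ by (\ref{2-1}), I get $\varkappa\langle N,X\rangle=0$, so $\langle N,X\rangle=0$ since $\varkappa>0$. Thus $X=\cos\theta\,T+\mu B$ with $\mu^{2}=1-\cos^{2}\theta$ constant, i.e. $\mu=\sin\theta$ up to sign. Differentiating $\langle N,X\rangle=0$ once more, with $D_{T}N=-\varkappa T+\tau B$ and $D_{T}X=\tfrac12[T,X]=\tfrac{\sin\theta}{2}[T,B]=-\sin\theta\,\tau_{G}N$ from Proposition \ref{prop 2.1}, yields $-\varkappa\cos\theta+(\tau-\tau_{G})\sin\theta=0$, that is $\tau=\varkappa\cot\theta+\tau_{G}$, so $c=\cot\theta$ works. (If $\sin\theta=0$ the same identity forces $\varkappa=0$, contradicting regularity, so that degenerate angle does not arise.)

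For the converse, assume $\tau=c\varkappa+\tau_{G}$ and choose $\theta$ with $\cot\theta=c$. I would then consider the field $W=\cos\theta\,T+\sin\theta\,B$ along $\alpha$ and compute, via the Frenet equations, $D_{T}W=(\varkappa\cos\theta-\tau\sin\theta)N$; substituting $\tau=c\varkappa+\tau_{G}$ with $c=\cot\theta$ collapses this to $D_{T}W=-\tau_{G}\sin\theta\,N$. On the other hand $\tfrac12[T,W]=\tfrac{\sin\theta}{2}[T,B]=-\tau_{G}\sin\theta\,N$ by Proposition \ref{prop 2.1}, so $\dot W=D_{T}W-\tfrac12[T,W]=0$. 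Writing $W=\sum w_{i}X_{i}$ in the orthonormal basis of $\mathfrak{g}$, this forces each $w_{i}$ to be constant, so $W$ is the restriction to $\alpha$ of a left-invariant vector field; it has unit length and $\langle T,W\rangle=\cos\theta$ is constant, hence $\alpha$ is a general helix.

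The individual calculations are short, so the main point requiring care is the converse's observation that $\dot W=0$ is precisely what promotes the Frenet combination $\cos\theta\,T+\sin\theta\,B$ — a priori defined only along the curve — to a genuine left-invariant vector field with constant Lie-algebra coordinates; this is where the structure of (\ref{2-3}) and the remark that $\dot W=0$ for left-invariant $W$ gets used in both directions. I also need to retain the standing hypothesis $\varkappa>0$ to divide by $\varkappa$ in the forward direction and to exclude $\theta\equiv 0,\pi$.
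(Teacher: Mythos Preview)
Your argument is correct. The chain of equivalences is set up properly, the forward computation $-\varkappa\cos\theta+(\tau-\tau_{G})\sin\theta=0$ follows exactly as you indicate once one uses $D_{T}X=\tfrac12[T,X]$ together with Proposition~\ref{prop 2.1}, and in the converse the key observation that $\dot W=0$ (not merely $D_{T}W=0$) is what promotes $W=\cos\theta\,T+\sin\theta\,B$ to a genuine left-invariant field is handled cleanly.

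There is, however, nothing to compare against: the paper does \emph{not} supply its own proof of Theorem~\ref{teo 2.1}. The result is stated in the Preliminaries as background, with the characterisation $\tau=c\varkappa+\tau_{G}$ attributed to \c{C}ift\c{c}i \cite{ciftci} and the reformulation via the harmonic curvature $H$ to \cite{zeki}. Your write-up is essentially the standard proof one finds in those sources, so it is entirely appropriate here; just be aware that you are reconstructing a cited result rather than matching an argument present in this paper.
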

\end{definition}

\begin{theorem}
\label{teo 2.2}Let $\alpha:I\subset \mathbb{R\rightarrow}G$ \ be a unit speed
curve with the Frenet apparatus $\left(  T,N,B,\varkappa,\tau \right)  $. Then
$\alpha$ is a slant helix if and only if%
\[
\sigma_{N}=\frac{\varkappa(1+H^{2})^{\frac{3}{2}}}{H^{\shortmid}}=\tan \theta
\]
is a constant where $H$ is a harmonic curvature function of the curve $\alpha$
and $\theta \neq \frac{\pi}{2}$ is a constant (see \cite{zeki}).
\end{theorem}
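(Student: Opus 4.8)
The plan is to expand the left-invariant unit field in the Frenet frame of $\alpha$ and read off the slant-helix condition from the requirement of left-invariance. Let $X$ be a left-invariant unit vector field and write $X = a_1 T + a_2 N + a_3 B$ along $\alpha$, where $a_1,a_2,a_3$ are smooth functions of the arc length $s$. Because $X$ is left-invariant we have $\overset{\cdot}{X}=0$, so the covariant-derivative formula (\ref{2-3}) reduces to $D_T X = \tfrac12[T,X]$. Using Proposition \ref{prop 2.1}, i.e.\ $[T,N]=2\tau_G B$, $[T,B]=-2\tau_G N$ and $[T,T]=0$, I would first obtain
\[
D_T X = \tfrac12[T,X] = -\tau_G a_3\,N + \tau_G a_2\,B .
\]
Independently, differentiating $X=a_1T+a_2N+a_3B$ with the Serret--Frenet relations $D_TT=\varkappa N$, $D_TN=-\varkappa T+\tau B$, $D_TB=-\tau N$ gives a second expression for $D_TX$ in the same frame.

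The decisive step is to equate the two expressions componentwise. This yields the system
\[
a_1'=\varkappa a_2,\qquad \varkappa a_1+a_2'-(\tau-\tau_G)a_3=0,\qquad a_3'=-(\tau-\tau_G)a_2 .
\]
The Lie-group term $\tau_G$ is precisely what merges with the torsion to form $\tau-\tau_G=\varkappa H$, so the last two equations become $\varkappa a_1+a_2'-\varkappa H a_3=0$ and $a_3'=-\varkappa H a_2$. This is the point where Proposition \ref{prop 2.1} and the definition $H=(\tau-\tau_G)/\varkappa$ do all the work; once the bracket terms have been absorbed, the remaining manipulation is formally the classical Euclidean one.

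For the forward implication I would assume $\alpha$ is a slant helix, so $a_2=\left\langle N,X\right\rangle=\cos\theta$ is constant and $a_2'=0$. The middle equation then gives $a_1=Ha_3$, while $\left\Vert X\right\Vert=1$ forces $a_3^2(1+H^2)=\sin^2\theta$. Differentiating $a_1=Ha_3$ and inserting $a_1'=\varkappa\cos\theta$ and $a_3'=-\varkappa H\cos\theta$ produces $\varkappa\cos\theta\,(1+H^2)=H'a_3$; squaring and eliminating $a_3$ by means of $a_3^2(1+H^2)=\sin^2\theta$ gives $\varkappa^2(1+H^2)^3/(H')^2=\tan^2\theta$, that is, $\sigma_N=\varkappa(1+H^2)^{3/2}/H'=\tan\theta$ is constant.

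For the converse I would run the construction backwards: assuming $\sigma_N=\tan\theta$ is a fixed constant, define $X$ along $\alpha$ through $a_2=\cos\theta$, $a_3=\sin\theta/\sqrt{1+H^2}$ and $a_1=Ha_3$. A direct differentiation shows that the three equations $a_1'=\varkappa a_2$, $a_2'=0$, $a_3'=-\varkappa H a_2$ hold exactly because $H'/\bigl(\varkappa(1+H^2)^{3/2}\bigr)=\cot\theta$; since this system is equivalent to $D_TX=\tfrac12[T,X]$, hence to $\overset{\cdot}{X}=0$ by (\ref{2-3}), the coordinates of $X$ in the orthonormal basis of $\mathfrak{g}$ are constant and $X$ is a genuine left-invariant unit field with $\left\langle N,X\right\rangle=\cos\theta$ constant, so $\alpha$ is a slant helix. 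I expect the only real difficulty to be the bookkeeping: keeping the signs consistent between the two forms of $D_TX$, correctly collapsing $\tau_G$ into $\varkappa H$, and handling the sign choice for $a_3$ together with the degenerate loci $H=0$ or $H'=0$ where $\sigma_N$ must be read with care.
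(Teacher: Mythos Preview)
The paper does not actually prove this theorem; it is quoted from the cited reference \cite{zeki} as background material in the Preliminaries section, so there is no in-paper argument to compare against. Your proof is correct and is precisely the standard route one would expect in \cite{zeki}: expand the candidate left-invariant field $X$ in the Frenet frame, use $\overset{\cdot}{X}=0$ together with (\ref{2-3}) and Proposition \ref{prop 2.1} to obtain the first-order system in $a_1,a_2,a_3$, and then read off the slant-helix characterization from $a_2=\cos\theta$ constant. The key Lie-group ingredient---that the bracket terms combine with $\tau$ to give $\tau-\tau_G=\varkappa H$---is handled exactly as intended, and both implications are carried through cleanly; the sign and degeneracy caveats you flag at the end are the only genuine bookkeeping points.
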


\section{Mannheim partner curves in a three dimensional Lie group}

In this section, we define Mannheim partner curves and their characterizations
are given in a three dimensional Lie group $G$ with a bi-invariant metric
$\left \langle \text{ },\right \rangle $. Also we give some characterizations of
Mannheim partner curves using the special cases of $G$.

%\dedicatory{This paper is dedicated to Professor ABCD}

\begin{definition}
\label{tan 3.1}A curve $\alpha$ in $3$-dimensional Lie group $G$ is a
\emph{Mannheim curve }if there exists a special curve\emph{\ }$\beta$ in
$3$-dimensional Lie group $G$ such that principal normal vector field of
$\alpha$ is linearly dependent binormal vector field of $\beta$ at
corresponding point under $\psi$ which is bijection from $\alpha$ to $\beta.$
In this case $\beta$ is called the \emph{Mannheim partner} $\emph{curve}$ of
$\alpha$ and $\left(  \alpha,\beta \right)  $ is called \emph{Mannheim}
$\emph{curve}$\emph{\ couple.}
\end{definition}

The curve $\alpha:I\subset \mathbb{R\rightarrow}G$ in $3$-dimensional Lie group
$G$ is parametrized by the arc-length parameter $s$ and from the Definition
\ref{tan 3.1} Bertrand mate curve of $\alpha$ is given $\beta:\overline
{I}\subset \mathbb{R\rightarrow}G$ in $3$-dimensional Lie group $G$ with the
help of Figure 1 such that%
\[
\text{\texttt{Figure1}}\mathtt{:}\text{{}Mannheim curve\ couple }\left(
\alpha,\beta \right)
\]%
\[
\beta \left(  s\right)  =\alpha \left(  s\right)  +\lambda \left(  s\right)
N\left(  s\right)  ,\text{ }s\in I
\]
where $\lambda$ is a smooth function on $I$ and $N$ is the principal normal
vector field of $\alpha$.

\begin{theorem}
\label{teo 3.1}Let $\alpha:I\subset \mathbb{R\rightarrow}G$ and $\beta
:\overline{I}\subset \mathbb{R\rightarrow}G$ be a Mannheim curve couple with
arc-length parameter $s$ and $\overline{s},$ respectively. Then corresponding
points are a fixed distance apart for all $s\in I$, that is,
\[
d\left(  \alpha \left(  s\right)  ,\beta \left(  s\right)  \right)
=\text{constant, \  \ for all }s\in I
\]

\end{theorem}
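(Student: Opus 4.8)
The plan is to differentiate the defining relation $\beta(s)=\alpha(s)+\lambda(s)N(s)$ along the curve and exploit the Frenet equations in $G$ to pin down $\lambda$. First I would compute $D_T\beta$ using the Serret–Frenet formula $D_TN=-\varkappa T+\tau B$ and the product rule for the Levi–Civita connection, obtaining
\[
\frac{d\beta}{ds}=T+\lambda'N+\lambda\left(-\varkappa T+\tau B\right)=\left(1-\lambda\varkappa\right)T+\lambda'N+\lambda\tau B.
\]
The geometric content of the Mannheim condition (Definition \ref{tan 3.1}) is that the principal normal $N$ of $\alpha$ is, up to sign, the binormal $B^{*}$ of $\beta$; equivalently, $N$ is orthogonal to the tangent $T^{*}$ of $\beta$. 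Since $T^{*}$ is a positive multiple of $d\beta/ds$, taking the inner product of the displayed expression with $N$ forces $\lambda'=0$, i.e. $\lambda$ is constant. This is the key step, and I expect it to be the main (though short) obstacle: one must be careful that the bi-invariant metric behaves well under the connection $D$, which it does because $D$ is the Levi–Civita connection of $\langle\ ,\ \rangle$, so $T\langle N,N\rangle=2\langle D_TN,N\rangle$ and the usual metric-compatibility manipulations are valid exactly as in the Euclidean case.

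Once $\lambda$ is known to be constant, the conclusion is immediate. By definition,
\[
d\left(\alpha(s),\beta(s)\right)^2=\langle\beta(s)-\alpha(s),\beta(s)-\alpha(s)\rangle=\langle\lambda N(s),\lambda N(s)\rangle=\lambda^{2}\langle N(s),N(s)\rangle=\lambda^{2},
\]
using that $N$ is a unit vector field along $\alpha$. Hence $d(\alpha(s),\beta(s))=|\lambda|$ is independent of $s$, which is the assertion.

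I should remark on one subtlety in phrasing: in a general Lie group $G$ the "distance between corresponding points" has to be read as the length of the connecting vector $\beta(s)-\alpha(s)=\lambda(s)N(s)$ in the ambient sense the authors are using (equivalently, the norm computed via the bi-invariant metric on the relevant tangent space), rather than the Riemannian distance function on $G$; with that reading the proof above is complete. Alternatively, if one insists on the Riemannian distance, one notes that $\beta(s)$ is obtained from $\alpha(s)$ by following the geodesic in the direction $N(s)$ for parameter-length $\lambda$ — since $D_TN$ has no role here and $N(s)$ is a unit vector, the exponential map gives $d(\alpha(s),\beta(s))=|\lambda|$ again, constant in $s$. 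Either way, the whole argument reduces to the single computation showing $\lambda'=0$, and no deeper structure of $G$ is needed beyond metric-compatibility of $D$ and the Serret–Frenet formulas recalled in Section 2.
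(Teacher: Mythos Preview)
Your approach is the same as the paper's: write $\beta=\alpha+\lambda N$, differentiate along the curve, and read off the $N$-coefficient using that $N$ is parallel to $B_\beta$ (hence orthogonal to $T_\beta$), which forces $\lambda'=0$. The conclusion $d(\alpha,\beta)=|\lambda|$ then follows immediately, exactly as you wrote.

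One correction worth noting, though it does not affect the argument for \emph{this} theorem: in the Lie group setting the tangent vector to $\beta$ is computed via the ``dot'' derivative of Eq.~\eqref{2-3}, not the covariant derivative. Concretely, $\tfrac{d\beta}{ds}=T+\lambda'N+\lambda\,\dot N$ with $\dot N=D_TN-\tfrac12[T,N]=-\varkappa T+(\tau-\tau_G)B$, so the $B$-coefficient is $\lambda(\tau-\tau_G)$ rather than your $\lambda\tau$. Your use of $D_TN$ in place of $\dot N$ silently drops the $\tfrac12[T,N]$ term; since the $N$-coefficient is unaffected, your orthogonality argument still yields $\lambda'=0$, but the displayed expression for $T_\beta$ is not the one the paper obtains (and the discrepancy would matter in the later theorems where the $B$-component is used).
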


\begin{proof}
From Definition \ref{tan 3.1}, we can simply write
\begin{equation}
\beta \left(  s\right)  =\alpha \left(  s\right)  +\lambda \left(  s\right)
N\left(  s\right) \label{3-2}%
\end{equation}
Differentiating the Eq. \eqref{3-2} with respect to $s$ and using the Eq.
\eqref{2-3}, we get%
\begin{align*}
\frac{d\beta \left(  \overline{s}\right)  }{d\overline{s}}\frac{d\overline{s}%
}{ds}  & =\frac{d\alpha \left(  s\right)  }{ds}+\lambda^{\prime}\left(
s\right)  N\left(  s\right)  +\lambda(s)\overset{\cdot}{N}(s)\\
& =T\left(  s\right)  +\lambda^{\prime}\left(  s\right)  N\left(  s\right)
+\lambda(s)\left[  D_{T}N-\frac{1}{2}\left[  T,N\right]  \right]
\end{align*}
and with the help of Proposition \ref{prop 3.1} and Frenet equations, we
obtain
\[
\frac{d\beta \left(  \overline{s}\right)  }{d\overline{s}}\frac{d\overline{s}%
}{ds}=\left(  1-\lambda \left(  s\right)  \varkappa \left(  s\right)  \right)
T(s)+\lambda^{\prime}\left(  s\right)  N\left(  s\right)  +\lambda \left(
s\right)  \left(  \left(  \tau-\tau_{G}\right)  \left(  s\right)  \right)
B\left(  s\right)
\]
or%
\begin{equation}
T_{\beta}\left(  \overline{s}\right)  =\frac{ds}{d\overline{s}}\left[  \left(
1-\lambda(s)\varkappa \left(  s\right)  \right)  T\left(  s\right)
+\lambda^{\prime}\left(  s\right)  N\left(  s\right)  +\lambda(s)\left(
\tau-\tau_{G}\right)  (s)B\left(  s\right)  \right]  .\label{3-3}%
\end{equation}
And then, we know that $\left \{  N\left(  s\right)  ,B_{\beta}\left(
\overline{s}\right)  \right \}  $ is a linearly dependent set, so we have
\[
\left \langle T_{\beta}\left(  \overline{s}\right)  ,B_{\beta}\left(
\overline{s}\right)  \right \rangle =\frac{ds}{d\overline{s}}\left[
\begin{array}
[c]{c}%
\left(  1-\lambda \left(  s\right)  \varkappa \left(  s\right)  \right)
\left \langle T(s),B_{\beta}\left(  \overline{s}\right)  \right \rangle
+\lambda^{\prime}\left(  s\right)  \left \langle N(s),B_{\beta}\left(
\overline{s}\right)  \right \rangle \\
+\lambda(s)\left(  \tau-\tau_{G}\right)  (s)\left \langle B(s),B_{\beta
}(\left(  \overline{s}\right)  )\right \rangle
\end{array}
\right]
\]
or%
\[
\lambda^{\prime}\left(  s\right)  =0
\]
that is, $\lambda \left(  s\right)  $ is constant function on $I.$ This
completes the proof.
\end{proof}

\begin{theorem}
\label{teo 3.2}Let $\alpha:I\subset \mathbb{R\rightarrow}G$ \ be a parametrized
curve with arc length parameter $s$ and the Frenet apparatus $\left(
T,N,B,\varkappa,\tau \right)  $. Then, $\alpha$ is Mannheim curve if and only
if
\begin{equation}
\lambda \varkappa \left(  1+H^{2}\right)  =1,\text{ for all }s\in I\label{3-4}%
\end{equation}
where $\lambda$ is constant and $H$ is the harmonic curvature function of the
curve $\alpha.$
\end{theorem}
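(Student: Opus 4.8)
The plan is to start from the parametrization $\beta(s)=\alpha(s)+\lambda N(s)$ with $\lambda$ constant (justified by Theorem~\ref{teo 3.1}), differentiate along $\alpha$ using the covariant-derivative formula~\eqref{2-3} together with the Serret--Frenet equations and Proposition~\ref{prop 2.1}, and read off $T_\beta$ up to the scalar factor $ds/d\overline{s}$ exactly as in~\eqref{3-3}. Writing $\mu:=\lambda(\tau-\tau_G)$ and using $H=(\tau-\tau_G)/\varkappa$, this gives
\[
T_\beta\;=\;\frac{ds}{d\overline{s}}\Bigl[(1-\lambda\varkappa)\,T+\lambda\varkappa H\,B\Bigr],
\]
since the $N$-component vanishes. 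The Mannheim condition is that $N=\pm B_\beta$ at corresponding points, equivalently $B_\beta\perp\{T_\beta,N\}$ is generated by $N$; the cleanest way to exploit this is that $N$ must be orthogonal to $T_\beta$, which is automatic here, and more usefully that $\langle T,N_\beta\rangle$ and the full frame relations force a constraint.

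I would instead proceed by a second differentiation. First differentiate $T_\beta=\dfrac{ds}{d\overline{s}}\bigl[(1-\lambda\varkappa)T+\lambda\varkappa H\,B\bigr]$ with respect to $\overline{s}$; by the Frenet equation $D_{T_\beta}T_\beta=\varkappa_\beta N_\beta$, and since $N_\beta=\pm N$, the derivative must be proportional to $N$. Carrying out the differentiation (chain rule $\tfrac{d}{d\overline{s}}=\tfrac{ds}{d\overline{s}}\tfrac{d}{ds}$, then $D_T T=\varkappa N$, $D_T B=-\tau N$, and $\tfrac12[T,\cdot]$ terms handled via Proposition~\ref{prop 2.1}) produces a $T$-component, an $N$-component and a $B$-component. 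The $N$-component is the one that survives and gives $\varkappa_\beta$; the $T$- and $B$-components must both vanish. The $T$-component vanishing is a differential identity that is satisfied identically given the form of the factor $ds/d\overline{s}$, while the $B$-component vanishing yields, after dividing out the nonzero factor $ds/d\overline{s}$,
\[
\bigl[(1-\lambda\varkappa)\,(\tau-\tau_G)\bigr]\;-\;\bigl[(\lambda\varkappa H)\,\varkappa\bigr]\;=\;0,
\]
i.e. $(1-\lambda\varkappa)(\tau-\tau_G)=\lambda\varkappa H\varkappa=\lambda\varkappa^2 H$. Substituting $\tau-\tau_G=\varkappa H$ gives $(1-\lambda\varkappa)\varkappa H=\lambda\varkappa^2 H$, hence $1-\lambda\varkappa=\lambda\varkappa H^2$, which rearranges to $\lambda\varkappa(1+H^2)=1$. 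For the converse, assuming $\lambda\varkappa(1+H^2)=1$ with $\lambda$ constant, one runs the same computation backwards: define $\beta=\alpha+\lambda N$, check $\beta$ is regular (the tangential factor $1-\lambda\varkappa=\lambda\varkappa H^2\neq 0$ off the degenerate locus), and verify that $B_\beta$ is parallel to $N$ by checking $\langle B_\beta,T\rangle=\langle B_\beta,B\rangle=0$ using the explicit $T_\beta$ and $N_\beta$.

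The main obstacle I anticipate is bookkeeping in the second differentiation: one must correctly expand $D_{T_\beta}T_\beta$ including the $\tfrac12[T,\cdot]$ contributions to the covariant derivative and the derivatives of the scalar coefficients $1-\lambda\varkappa$ and $\lambda\varkappa H$, then correctly separate the result into $T$-, $N$- and $B$-components so that the vanishing of the $B$-component is isolated cleanly. A secondary subtlety is the role of $B_\beta$ versus $\pm N$: the definition only asks for linear dependence, so signs and the orientation of the Frenet frame of $\beta$ must be tracked, and one should note in passing that $\tau-\tau_G\neq 0$ (equivalently $H\neq 0$) is implicitly needed for the argument, since $H\equiv 0$ would force $\lambda\varkappa=1$ and collapse the $B$-component identically. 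Modulo these careful but routine manipulations, the relation~\eqref{3-4} falls out directly from the orthogonality $N=\pm B_\beta$.
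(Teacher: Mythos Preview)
Your overall strategy---write $\beta=\alpha+\lambda N$, differentiate once to get $T_\beta$ in terms of $T$ and $B$, then differentiate again and use the Mannheim alignment---is exactly the paper's approach. But the execution contains a genuine error that inverts the key step.

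You write ``since $N_\beta=\pm N$, the derivative must be proportional to $N$''. This misreads Definition~\ref{tan 3.1}: the Mannheim condition says the principal normal $N$ of $\alpha$ is linearly dependent with the \emph{binormal} $B_\beta$ of $\beta$, i.e. $B_\beta=\pm N$, not $N_\beta=\pm N$. Since $T_\beta\in\mathrm{span}\{T,B\}$ and $B_\beta\parallel N$, the vector $N_\beta=B_\beta\times T_\beta$ lies in $\mathrm{span}\{T,B\}$. Therefore $D_{T_\beta}T_\beta=\varkappa_\beta N_\beta$ has \emph{vanishing $N$-component}, not vanishing $T$- and $B$-components as you claim. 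The paper's computation gives the $N$-component as
\[
\bigl(\varkappa-\lambda\varkappa^{2}-\lambda(\tau-\tau_G)^{2}\bigr)\Bigl(\tfrac{ds}{d\overline{s}}\Bigr)^{2},
\]
and setting this to zero is what yields $\lambda\varkappa(1+H^{2})=1$ directly.

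Your claimed ``$B$-component'' expression $(1-\lambda\varkappa)(\tau-\tau_G)-\lambda\varkappa^{2}H$ does not arise from the differentiation (the actual $B$-component involves $\lambda(\tau-\tau_G)'$ and the second derivative of $s(\overline{s})$), and your subsequent algebra is also off: from $(1-\lambda\varkappa)\varkappa H=\lambda\varkappa^{2}H$ one would get $1-\lambda\varkappa=\lambda\varkappa$, not $1-\lambda\varkappa=\lambda\varkappa H^{2}$. So the correct relation is reached only by accident. Fix the alignment to $B_\beta\parallel N$, set the $N$-component of the second derivative to zero, and the result follows in one line.
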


\begin{proof}
Let $\alpha:I\subset \mathbb{R\rightarrow}G$ be a parametrized Mannheim curve
with arc length parameter $s$ then we can write
\[
\beta \left(  s\right)  =\alpha \left(  s\right)  +\lambda N\left(  s\right)
\]
Differentiating the above equality with respect to $s$ and by using the Frenet
equations, we get%
\[
\frac{d\beta \left(  s\right)  }{ds}=\left(  1-\lambda \varkappa \left(
s\right)  \right)  T\left(  s\right)  +\lambda \left(  \tau-\tau_{G}\right)
(s)B\left(  s\right)  .
\]
On the other hand, we have%
\[
T_{\beta}=\frac{d\beta}{ds}\frac{ds}{d\overline{s}}=\left[  \left(
1-\lambda \varkappa \left(  s\right)  \right)  T\left(  s\right)  +\lambda
\left(  \tau-\tau_{G}\right)  (s)B\left(  s\right)  \right]  \frac
{ds}{d\overline{s}}.
\]
By taking the derivative of this equation with respect to $\overline{s}$ and
using the Frenet equations we obtain%
\begin{align*}
\frac{dT_{\beta}}{d\overline{s}}  & =\left[  -\lambda \frac{d\varkappa}%
{ds}T(s)+\left(  \varkappa-\lambda \varkappa^{2}-\lambda \left(  \tau-\tau
_{G}\right)  ^{2}\right)  N(s)+\lambda \left(  \tau-\tau_{G}\right)
^{\shortmid}B(s)\right]  \left(  \frac{ds}{d\overline{s}}\right)  ^{2}\\
& +\left[  \left(  1-\lambda \varkappa \left(  s\right)  \right)  T\left(
s\right)  +\lambda \left(  \tau-\tau_{G}\right)  (s)B\left(  s\right)  \right]
\frac{d^{2}s}{d\overline{s}^{2}}%
\end{align*}
From this equation we get
\[
\left(  \varkappa-\lambda \varkappa^{2}-\lambda \left(  \tau-\tau_{G}\right)
^{2}\right)  =0,
\]%
\[
\lambda \varkappa \left(  1+H^{2}\right)  =1.
\]
Conversely, if $\lambda \varkappa \left(  1+H^{2}\right)  =1$ then we can easily
see that $\alpha$ is a Mannheim curve.

This completes proof.
\end{proof}

\begin{corollary}
If $G$ is Abellian Lie group then $\tau_{G}=0.$ And so, if $\alpha
:I\subset \mathbb{R\rightarrow}G$ \ is a parametrized Mannheim curve with arc
length parameter $s$ and the Frenet apparatus $\left(  T,N,B,\varkappa
,\tau \right)  $ in Abellian Lie group $G$. Then, $\alpha$ satisfy the
following equality%
\[
\lambda \left(  \varkappa^{2}+\tau^{2}\right)  =\varkappa
\]
%
%\dedicatory{This paper is dedicated to Professor ABCD}

\end{corollary}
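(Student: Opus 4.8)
The plan is to treat this corollary as a direct specialization of Theorem \ref{teo 3.2}, so the entire argument splits into two short steps: (i) verify that the group torsion $\tau_G$ vanishes identically when $G$ is Abelian, and (ii) feed this into the characterization $\lambda\varkappa(1+H^2)=1$ already established for Mannheim curves, simplifying the harmonic curvature accordingly.

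First I would establish that $\tau_G=0$. An Abelian Lie group is commutative, so its Lie algebra $\mathfrak{g}$ has vanishing bracket, $[X,Y]=0$ for all $X,Y\in\mathfrak{g}$. In particular $[T,N]=0$ along $\alpha$, whence by Definition \ref{tan 2.2},
\[
\tau_G=\frac{1}{2}\left\langle[T,N],B\right\rangle=0.
\]
This is the only point at which the Abelian hypothesis is used, and it is precisely what the first sentence of the corollary asserts. As a consistency check, Proposition \ref{prop 2.1} gives $[T,N]=2\tau_G B$, which degenerates to $[T,N]=0$ exactly when $\tau_G=0$, matching the computation above.

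Next I would rewrite the harmonic curvature function under this simplification. By Definition \ref{tan 2.4} we have $H=(\tau-\tau_G)/\varkappa$, which with $\tau_G=0$ collapses to $H=\tau/\varkappa$. Since $\alpha$ is a Mannheim curve, Theorem \ref{teo 3.2} supplies $\lambda\varkappa(1+H^2)=1$ with $\lambda$ constant. Substituting $H=\tau/\varkappa$ gives
\[
\lambda\varkappa\left(1+\frac{\tau^{2}}{\varkappa^{2}}\right)=\lambda\,\frac{\varkappa^{2}+\tau^{2}}{\varkappa}=1,
\]
and clearing the denominator (valid since $\varkappa\neq 0$ for a Frenet curve) yields exactly $\lambda(\varkappa^{2}+\tau^{2})=\varkappa$.

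I expect no genuine obstacle in this proof; all the substantive content lives in Theorem \ref{teo 3.2}, and the corollary is an algebraic specialization of it. The single point meriting a line of justification is the vanishing of the Lie bracket in the commutative case, which is standard and already reflected in Proposition \ref{prop 2.1}. Thus the proof is essentially a one-line substitution once $\tau_G=0$ is recorded.
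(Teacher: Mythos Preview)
Your proposal is correct and follows exactly the same approach as the paper: the paper's own proof is the single sentence ``If $G$ is Abelian then using $\tau_G=0$ and Theorem~\ref{teo 3.2} we have the result,'' and you have simply spelled out the substitution $H=\tau/\varkappa$ in detail.
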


\begin{proof}
If $G$ is Abellian Lie group then using the $\tau_{G}=0$ and Theorem
\ref{teo 3.2} we have the result.
\end{proof}

%\dedicatory{This paper is dedicated to Professor ABCD}

So, the above Corollary shows that the study is a generalization of Mannheim
curves defined by Liu and Wang \cite{liu} in Euclidean 3-space.

%\dedicatory{This paper is dedicated to Professor ABCD}

\begin{theorem}
\label{teo 3.2.5}Let $\alpha:I\subset \mathbb{R\rightarrow}G$ \ be a
parametrized curve with arc length parameter $s$. Then $\beta$ is the Mannheim
partner curve of $\alpha$ if and only if the curvature $\varkappa_{\beta}$ and
the torsion $\tau_{\beta}$ of $\beta$ satisfy the following equation
\[
\frac{d\varkappa_{\beta}H_{\beta}}{d\overline{s}}=\frac{\varkappa_{\beta}}%
{\mu}(1+\mu^{2}\varkappa_{\beta}^{2}H_{\beta}^{2})
\]
where $\mu$ is constant and $H_{\beta}$ is the harmonic curvature function of
the curve $\beta.$
\end{theorem}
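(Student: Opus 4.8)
The plan is to work from the vantage point of the Mannheim partner curve $\beta$, parametrized by its arc length $\overline{s}$, and to express everything in terms of the Frenet apparatus $\left(T_{\beta},N_{\beta},B_{\beta},\varkappa_{\beta},\tau_{\beta}\right)$ of $\beta$ together with the harmonic curvature $H_{\beta}=\left(\tau_{\beta}-\tau_{G,\beta}\right)/\varkappa_{\beta}$. First I would use the defining property: $\beta$ being the Mannheim partner of $\alpha$ means that the principal normal $N$ of $\alpha$ is parallel to the binormal $B_{\beta}$ of $\beta$ at corresponding points, so along $\beta$ we may write $\alpha(\overline{s})=\beta(\overline{s})-\mu\, B_{\beta}(\overline{s})$ for a function $\mu$; by the analogue of Theorem \ref{teo 3.1} applied with the roles reversed (the same computation, differentiating and pairing against $N=\pm B_{\beta}$), $\mu$ is forced to be a constant. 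This is the reciprocal version of the relation $\beta=\alpha+\lambda N$, and it is the natural starting point because the conclusion is stated purely in terms of $\beta$'s invariants.

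Next I would differentiate $\alpha = \beta-\mu B_{\beta}$ with respect to $\overline{s}$ using the Serret--Frenet equations $D_{T_{\beta}}B_{\beta}=-\tau_{\beta}N_{\beta}$ together with $\overset{\cdot}{B_{\beta}}=D_{T_{\beta}}B_{\beta}-\tfrac12[T_{\beta},B_{\beta}]$ and Proposition \ref{prop 2.1}, which gives $[T_{\beta},B_{\beta}]=-2\tau_{G,\beta}N_{\beta}$. This yields
\[
T_{\alpha}\,\frac{ds}{d\overline{s}} = T_{\beta} + \mu\left(\tau_{\beta}-\tau_{G,\beta}\right)N_{\beta},
\]
so $T_{\alpha}$ lies in the $\mathrm{span}\{T_{\beta},N_{\beta}\}$ plane, and writing $\mu(\tau_{\beta}-\tau_{G,\beta})=\mu\varkappa_{\beta}H_{\beta}$ we can read off $\cos\phi,\sin\phi$ for the angle $\phi$ between $T_{\alpha}$ and $T_{\beta}$ in terms of $\mu\varkappa_{\beta}H_{\beta}$ and the speed factor $ds/d\overline{s}=\left(1+\mu^{2}\varkappa_{\beta}^{2}H_{\beta}^{2}\right)^{-1/2}$. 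Differentiating once more and extracting the $N_{\beta}$-component of $D_{T_{\beta}}T_{\alpha}$ — which, since $T_{\alpha}=\cos\phi\,T_{\beta}+\sin\phi\,N_{\beta}$ and $D_{T_{\beta}}T_{\alpha}$ must be proportional to $N_{\alpha}=\pm B_{\beta}$, has to vanish in the $T_{\beta}$ and $N_{\beta}$ directions — produces a differential relation among $\phi'$, $\varkappa_{\beta}$, and $\tau_{G,\beta}$. Rewriting $\phi$ via $\tan\phi=\mu\varkappa_{\beta}H_{\beta}$ and clearing denominators should collapse exactly to
\[
\frac{d\left(\varkappa_{\beta}H_{\beta}\right)}{d\overline{s}}=\frac{\varkappa_{\beta}}{\mu}\left(1+\mu^{2}\varkappa_{\beta}^{2}H_{\beta}^{2}\right),
\]
which is the asserted identity. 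For the converse, given a curve $\beta$ whose invariants satisfy this equation, I would define $\alpha=\beta-\mu B_{\beta}$ and check by the same computation, run backwards, that $N$ of $\alpha$ is parallel to $B_{\beta}$, so $\beta$ is indeed the Mannheim partner of $\alpha$.

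The main obstacle I anticipate is bookkeeping in the second differentiation: one has to carry the Lie-bracket correction terms through $D_{T_{\beta}}(\,\cdot\,)=\overset{\cdot}{(\,\cdot\,)}+\tfrac12[T_{\beta},\,\cdot\,]$ consistently, using Proposition \ref{prop 2.1} each time a bracket with $N_{\beta}$ or $B_{\beta}$ appears, and one must be careful that $ds/d\overline{s}$ is itself a nonconstant function of $\overline{s}$ whose derivative feeds into the $T_{\beta}$-component. The cleanest route is probably to parametrize the tangent of $\alpha$ by the angle $\phi(\overline{s})$ from the outset, reducing the whole computation to tracking a single scalar function and its derivative, so that the ``$N_{\beta}$-component vanishes'' condition becomes a first-order ODE that is manifestly equivalent to the displayed equation after the substitution $\tan\phi=\mu\varkappa_{\beta}H_{\beta}$. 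The algebraic identity $1+\tan^{2}\phi=\sec^{2}\phi$ is what converts the trigonometric form into the polynomial form $1+\mu^{2}\varkappa_{\beta}^{2}H_{\beta}^{2}$ on the right-hand side.
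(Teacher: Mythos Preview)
Your approach is essentially the paper's: write $\alpha=\beta\pm\mu B_{\beta}$, show $\mu$ is constant, express $T_{\alpha}$ in the $\{T_{\beta},N_{\beta}\}$-plane via an angle $\theta$, differentiate once more and use $N_{\alpha}\parallel B_{\beta}$ to obtain $d\theta/d\overline{s}=-\varkappa_{\beta}$, then combine with $\tan\theta=\pm\mu\varkappa_{\beta}H_{\beta}$ and differentiate. Two small corrections: taking norms in your tangent equation gives $ds/d\overline{s}=\sqrt{1+\mu^{2}\varkappa_{\beta}^{2}H_{\beta}^{2}}$, not its reciprocal; and for the converse the paper does not merely ``run the computation backwards'' (that direction used $N\parallel B_{\beta}$ as input) but instead defines $\alpha=\beta+\mu B_{\beta}$, differentiates twice, and takes two successive cross products of the resulting equations to exhibit $\varkappa N\,(ds/d\overline{s})^{4}$ as a scalar multiple of $B_{\beta}$ directly.
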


%\dedicatory{This paper is dedicated to Professor ABCD}

\begin{proof}
Let $\alpha:I\subset \mathbb{R\rightarrow}G$ be a parametrized Mannheim curve
with arc length parameter $s$ then we can write%
\[
\alpha \left(  \overline{s}\right)  =\beta \left(  \overline{s}\right)
+\mu \left(  \overline{s}\right)  B_{\beta}\left(  \overline{s}\right)
\]
for some function $\mu \left(  \overline{s}\right)  $. By taking the derivative
of this equation with respect to $\overline{s}$ and using the Frenet equations
we obtain%
\[
T\frac{ds}{d\overline{s}}=T_{\beta}+\mu^{\shortmid}\left(  \overline
{s}\right)  B_{\beta}\left(  \overline{s}\right)  -\mu \left(  \overline
{s}\right)  \left(  \tau_{\beta}-\tau_{G_{\beta}}\right)  \left(  \overline
{s}\right)  N_{\beta}\left(  \overline{s}\right)
\]
or%
\[
T\frac{ds}{d\overline{s}}=T_{\beta}+\frac{d\mu \left(  \overline{s}\right)
}{d\overline{s}}B_{\beta}\left(  \overline{s}\right)  -\mu \left(  \overline
{s}\right)  \varkappa_{\beta}H_{\beta}N_{\beta}\left(  \overline{s}\right)
\]
where $H_{\beta}$ is the harmonic curvature function of the curve $\beta.$ And
then, we know that $\left \{  N\left(  s\right)  ,B_{\beta}\left(  \overline
{s}\right)  \right \}  $ is a linearly dependent set, so we have
\[
\frac{d\mu \left(  \overline{s}\right)  }{d\overline{s}}=0.
\]
This means that $\mu \left(  \overline{s}\right)  $ is a constant function.
Thus we have%
\begin{equation}
T\frac{ds}{d\overline{s}}=T_{\beta}-\mu \left(  \overline{s}\right)
\varkappa_{\beta}H_{\beta}N_{\beta}\left(  \overline{s}\right)  .\label{3-5}%
\end{equation}
On the other hand, we have%
\begin{equation}
T=T_{\beta}\cos \theta+N_{\beta}\sin \theta \label{3-6}%
\end{equation}
where $\theta$ is the angle between $T$ and $T_{\beta}$ at the corresponding
points of $\alpha$ and $\beta.$ By taking the derivative of this equation with
respect to $\overline{s}$ and using the Frenet equations we obtain%
\[
\varkappa N\frac{ds}{d\overline{s}}=-\left(  \varkappa_{\beta}+\frac{d\theta
}{d\overline{s}}\right)  \sin \theta T_{\beta}+\left(  \varkappa_{\beta}%
+\frac{d\theta}{d\overline{s}}\right)  \cos \theta N_{\beta}+\varkappa_{\beta
}H_{\beta}\sin \theta B_{\beta}.
\]
From this equation and the fact that the $\left \{  N\left(  s\right)
,B_{\beta}\left(  \overline{s}\right)  \right \}  $ is a linearly dependent
set, we get%
\[
\left \{
\begin{array}
[c]{c}%
\left(  \varkappa_{\beta}+\frac{d\theta}{d\overline{s}}\right)  \sin \theta=0\\
\left(  \varkappa_{\beta}+\frac{d\theta}{d\overline{s}}\right)  \cos \theta=0.
\end{array}
\right.
\]
For this reason we have%
\begin{equation}
\frac{d\theta}{d\overline{s}}=-\varkappa_{\beta}.\label{3-7}%
\end{equation}
From the Eq. \eqref{3-5} and Eq. \eqref{3-6} and notice that $T_{\beta}$ is
orthogonal to $B_{\beta}$, we find that%
\[
\frac{ds}{d\overline{s}}=\frac{1}{\cos \theta}=-\frac{\mu \varkappa_{\beta
}H_{\beta}}{\sin \theta}.
\]
Then we have%
\[
\mu \varkappa_{\beta}H_{\beta}=-\tan \theta.
\]
By taking the derivative of this equation and applying Eq. \eqref{3-7}, we get%
\[
\mu \frac{d\varkappa_{\beta}H_{\beta}}{d\overline{s}}=\varkappa_{\beta}\left(
1+\mu^{2}\varkappa_{\beta}^{2}H_{\beta}^{2}\right)
\]
that is%
\[
\frac{d\varkappa_{\beta}H_{\beta}}{d\overline{s}}=\frac{\varkappa_{\beta}}%
{\mu}\left(  1+\mu^{2}\varkappa_{\beta}^{2}H_{\beta}^{2}\right)  .
\]
Conversely, if the curvature $\varkappa_{\beta}$ and torsion $\tau_{\beta}$ of
the curve $\beta$ in three dimensional Lie group $G$ satisfy%
\[
\frac{d\varkappa_{\beta}H_{\beta}}{d\overline{s}}=\frac{\varkappa_{\beta}}%
{\mu}\left(  1+\mu^{2}\varkappa_{\beta}^{2}H_{\beta}^{2}\right)
\]
for constant $\mu \left(  \overline{s}\right)  ,$ then we define a curve a
curve by%
\begin{equation}
\alpha \left(  \overline{s}\right)  =\beta \left(  \overline{s}\right)  +\mu
B_{\beta}\left(  \overline{s}\right) \label{3-8}%
\end{equation}
and we will show that $\alpha$ is a Mannheim curve and $\beta$ is the partner
curve of $\alpha$ in three dimensional Lie group $G$. By taking the derivative
of Eq. \eqref{3-8} with respect to $\overline{s}$ twice, we get%
\begin{equation}
T\frac{ds}{d\overline{s}}=T_{\beta}-\mu \varkappa_{\beta}H_{\beta}N_{\beta
},\label{3-9}%
\end{equation}%
\begin{equation}
\varkappa N\left(  \frac{ds}{d\overline{s}}\right)  ^{2}+T\frac{d^{2}%
s}{d\overline{s}^{2}}=\mu \varkappa_{\beta}^{2}H_{\beta}T_{\beta}+\left(
\varkappa_{\beta}-\mu \frac{d\varkappa_{\beta}H_{\beta}}{d\overline{s}}\right)
N_{\beta}-\mu \varkappa_{\beta}^{2}H_{\beta}^{2}B_{\beta},\label{3-10}%
\end{equation}
respectively. Taking the cross product of Eq. \eqref{3-9} with Eq.
\eqref{3-10} and noticing that%
\[
\varkappa_{\beta}-\mu \frac{d\varkappa_{\beta}H_{\beta}}{d\overline{s}}+\mu
^{2}\varkappa_{\beta}^{3}H_{\beta}^{2}%
\]
we have%
\begin{equation}
\varkappa B\left(  \frac{ds}{d\overline{s}}\right)  ^{3}=\mu^{2}%
\varkappa_{\beta}^{3}H_{\beta}^{3}T_{\beta}+\mu \varkappa_{\beta}^{2}H_{\beta
}^{2}N_{\beta}.\label{3-11}%
\end{equation}
By taking the cross product of Eq. \eqref{3-9} with Eq. \eqref{3-11}, we get%
\[
\varkappa N\left(  \frac{ds}{d\overline{s}}\right)  ^{4}=-\mu \varkappa_{\beta
}^{2}H_{\beta}^{2}\left(  1+\mu^{2}\varkappa_{\beta}^{2}H_{\beta}^{2}\right)
B_{\beta}.
\]
This means that the principal normal vector field of the curve $\alpha$ and
binormal vector field of the curve $\beta$ are linearly dependent set. And so
$\alpha$ is a Mannheim curve and $\beta$ is Mannheim partner curve of the
curve $\alpha$ in three dimensional Lie group $G.$
\end{proof}

%\dedicatory{This paper is dedicated to Professor ABCD}

\begin{proposition}
\label{prop 3.1}Let $\alpha:I\subset \mathbb{R\rightarrow}G$\ be an
arc-lenghted Mannheim curve with the Frenet vector fields $\left \{
T,N,B\right \}  $ and $\beta:\overline{I}\subset \mathbb{R\rightarrow}G$ be a
Mannheim mate of $\alpha$ with the Frenet vector fields $\left \{  T_{\beta
},N_{\beta},B_{\beta}\right \}  .$ Then $\tau_{G}=\tau_{G_{\beta}}$ for the
curves $\alpha$ and $\beta$ where $\tau_{G}=\frac{1}{2}\left \langle \left[
T,N\right]  ,B\right \rangle $ and $\tau_{G_{\beta}}=\frac{1}{2}\left \langle
\left[  T_{\beta},N_{\beta}\right]  ,B_{\beta}\right \rangle .$
\end{proposition}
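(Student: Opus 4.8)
The plan is to run the whole computation inside the Frenet frame $\left\{T,N,B\right\}$ of $\alpha$, transporting the frame of $\beta$ through the Mannheim relation $\beta=\alpha+\lambda N$, where $\lambda$ is constant by Theorem~\ref{teo 3.1}. First I would differentiate $\beta=\alpha+\lambda N$: since $\lambda$ is constant and $\dot N=D_{T}N-\tfrac12[T,N]=-\varkappa T+(\tau-\tau_{G})B$ by the Frenet equations together with Proposition~\ref{prop 2.1}, this reproduces the expression already found in the proof of Theorem~\ref{teo 3.1},
\[
T_{\beta}=\frac{ds}{d\overline{s}}\big[(1-\lambda\varkappa)\,T+\lambda(\tau-\tau_{G})\,B\big].
\]
In particular $T_{\beta}$ lies in the plane $\operatorname{span}\{T,B\}$; write $T_{\beta}=aT+bB$ with $a^{2}+b^{2}=1$. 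By the definition of Mannheim partner curves, $N$ and $B_{\beta}$ are linearly dependent unit fields, so $B_{\beta}=\varepsilon N$ for a fixed sign $\varepsilon=\pm1$. Since $N_{\beta}$ is orthogonal to both $T_{\beta}$ and $B_{\beta}=\varepsilon N$, it must also lie in $\operatorname{span}\{T,B\}$; being the unit normal there to $T_{\beta}$ with $T_{\beta}\times N_{\beta}=B_{\beta}$, the orientation forces $N_{\beta}=\varepsilon(bT-aB)$.

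Next I would compute the bracket $[T_{\beta},N_{\beta}]$. The point to exploit is that, expressed in the left-invariant orthonormal frame as in the Preliminaries, the Lie bracket of two vector fields along $\alpha$ is pointwise $\mathbb{R}$-bilinear, so no derivatives of $a,b$ enter; using $[T,T]=[B,B]=0$ and $a^{2}+b^{2}=1$,
\[
[T_{\beta},N_{\beta}]=\varepsilon\big[aT+bB,\;bT-aB\big]=-\varepsilon(a^{2}+b^{2})[T,B]=-\varepsilon[T,B].
\]
Then Proposition~\ref{prop 2.1} gives $[T,B]=-2\tau_{G}N$, whence $[T_{\beta},N_{\beta}]=2\varepsilon\tau_{G}N$, and finally
\[
\tau_{G_{\beta}}=\tfrac12\big\langle[T_{\beta},N_{\beta}],B_{\beta}\big\rangle=\tfrac12\big\langle 2\varepsilon\tau_{G}N,\;\varepsilon N\big\rangle=\varepsilon^{2}\tau_{G}\langle N,N\rangle=\tau_{G},
\]
which is the assertion.

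I expect the main obstacle to be the justification that $[T_{\beta},N_{\beta}]$ expands purely algebraically in $\{T,B\}$ — that is, that the bracket of vector fields along the curve, read off in the left-invariant basis, carries no derivative terms in the coefficient functions. This is precisely the formula $[W,Z]=\sum w_{i}z_{j}[X_{i},X_{j}]$ recorded in the Preliminaries, so once it is invoked the remainder is routine orthonormal-frame algebra. A secondary, purely cosmetic point is fixing the orientation conventions so that $B_{\beta}=\varepsilon N$ holds with one definite sign of $\varepsilon$; but since $\varepsilon$ occurs squared in the last line, the conclusion $\tau_{G_{\beta}}=\tau_{G}$ is insensitive to that choice, so this does not really obstruct anything.
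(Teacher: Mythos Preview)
Your argument is correct and follows essentially the same route as the paper: express the Frenet frame of $\beta$ in the frame $\{T,N,B\}$ of $\alpha$ (so that $T_\beta,N_\beta\in\operatorname{span}\{T,B\}$ and $B_\beta\parallel N$), expand $[T_\beta,N_\beta]$ by pointwise bilinearity of the bracket, and reduce to $[T,B]=-2\tau_G N$ via Proposition~\ref{prop 2.1}. The only difference is cosmetic: the paper uses Theorem~\ref{teo 3.2} to write the explicit coefficients $a=\dfrac{H}{\sqrt{1+H^2}}$, $b=\dfrac{1}{\sqrt{1+H^2}}$, whereas you keep $a,b$ abstract with $a^2+b^2=1$ (and track the sign $\varepsilon$), which is a mild streamlining but not a different idea.
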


%\dedicatory{This paper is dedicated to Professor ABCD}

\begin{proof}
Let $\left(  \alpha,\beta \right)  $ be a Mannheim curve couple. From the Eq.
\eqref{3-3} we have%
\[
T_{\beta}\left(  \overline{s}\right)  =\left[  \left(  1-\lambda
\varkappa \left(  s\right)  \right)  T\left(  s\right)  +\lambda \left(
\tau-\tau_{G}\right)  (s)B\left(  s\right)  \right]  \frac{ds}{d\overline{s}}.
\]
We take the norm this equation and by using Eq. \eqref{3-4}, we obtain%
\[
\frac{d\overline{s}}{ds}=\lambda \varkappa H\sqrt{1+H^{2}}.
\]
If we think together the last two equations, we get%
\begin{equation}
T_{\beta}\left(  \overline{s}\right)  =\frac{H}{\sqrt{1+H^{2}}}T(s)+\frac
{1}{\sqrt{1+H^{2}}}B(s).\label{3-12}%
\end{equation}
Since $\left(  \alpha,\beta \right)  $ is a Mannheim curve couple we know
$B_{\beta}(\overline{s})=N(s).$ Then,%
\[
N_{\beta}(\overline{s})=B_{\beta}(\overline{s})\times T_{\beta}(\overline{s})
\]%
\[
N_{\beta}(\overline{s})=\frac{1}{\sqrt{1+H^{2}}}T(s)-\frac{H}{\sqrt{1+H^{2}}%
}B(s).
\]
We know from Definition \ref{tan 2.2} $\left \langle \left[  T_{\beta}%
,N_{\beta}\right]  ,B_{\beta}\right \rangle =2\tau_{G_{\beta}}$ for the curve
$\beta.$ Then with the help of above equations for $T_{\beta}\left(
\overline{s}\right)  ,N_{\beta}(\overline{s})$ and $B_{\beta}(\overline{s})$,
we obtain%
\[
\left \langle \left[  \frac{H}{\sqrt{1+H^{2}}}T+\frac{1}{\sqrt{1+H^{2}}}%
B,\frac{1}{\sqrt{1+H^{2}}}T(s)-\frac{H}{\sqrt{1+H^{2}}}B(s)\right]
,N\right \rangle =2\tau_{G_{\beta}},
\]%
\[
\frac{H^{2}}{\sqrt{1+H^{2}}}\left \langle \left[  T,N\right]  ,B\right \rangle
+\frac{1}{\sqrt{1+H^{2}}}\left \langle \left[  T,N\right]  ,B\right \rangle
=2\tau_{G_{\beta}}.
\]
By using the Proposition \ref{prop 2.1} in last equation, we get%
\[
\tau_{G}=\tau_{G_{\beta}.}%
\]
This completes proof.
\end{proof}

\begin{theorem}
\label{teo 3.3}Let $\alpha:I\subset \mathbb{R\rightarrow}G$ \ be a parametrized
Mannheim curve with curvature functions $\varkappa$, $\tau$ and $\beta
:\overline{I}\subset \mathbb{R\rightarrow}G$ be a Mannheim mate of $\alpha$
with curvatures functions $\varkappa_{\beta}$, $\tau_{\beta}.$ Then the
relations between these curvature functions are%
\begin{align*}
\varkappa_{\beta}\left(  \overline{s}\right)   & =\frac{H^{\shortmid}\left(
s\right)  }{\lambda \varkappa(s)H(s)\left(  1+H^{2}(s)\right)  ^{3/2}},\\
\tau_{\beta}\left(  \overline{s}\right)   & =\frac{1}{\lambda H\left(
s\right)  }+\tau_{G_{\beta}.}%
\end{align*}

\end{theorem}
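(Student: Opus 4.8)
The plan is to push the frame computation from the proof of Proposition~\ref{prop 3.1} one step further and read off $\varkappa_{\beta}$ and $\tau_{\beta}$ directly from the Serret--Frenet equations of $\beta$ in $G$. First I would collect the data already available: for a Mannheim curve couple $(\alpha,\beta)$ one has, from the proof of Proposition~\ref{prop 3.1}, the adapted frame
\[
T_{\beta}=\frac{H}{\sqrt{1+H^{2}}}\,T+\frac{1}{\sqrt{1+H^{2}}}\,B,\qquad N_{\beta}=\frac{1}{\sqrt{1+H^{2}}}\,T-\frac{H}{\sqrt{1+H^{2}}}\,B,\qquad B_{\beta}=N,
\]
together with $\dfrac{d\overline{s}}{ds}=\lambda\varkappa H\sqrt{1+H^{2}}$, the identity $\tau-\tau_{G}=\varkappa H$ from Definition~\ref{tan 2.4}, the Mannheim relation $\lambda\varkappa(1+H^{2})=1$ from Theorem~\ref{teo 3.2}, and $\tau_{G}=\tau_{G_{\beta}}$ from Proposition~\ref{prop 3.1}. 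I would also record the componentwise form of the Serret--Frenet equations obtained from \eqref{2-3} and Proposition~\ref{prop 2.1}, namely $\overset{\cdot}{T}=\varkappa N$ and $\overset{\cdot}{B}=-(\tau-\tau_{G})N=-\varkappa H\,N$ (dots being $d/ds$), as well as the bracket relation $\tfrac12[T_{\beta},N_{\beta}]=\tau_{G_{\beta}}B_{\beta}$ for the curve $\beta$, which is just Proposition~\ref{prop 2.1} applied to $\beta$.

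For the curvature, since $[T_{\beta},T_{\beta}]=0$, formula \eqref{2-3} gives $\varkappa_{\beta}N_{\beta}=D_{T_{\beta}}T_{\beta}=\overset{\cdot}{T}_{\beta}$, where the dot is now $d/d\overline{s}$. Differentiating the expression for $T_{\beta}$ and substituting $\overset{\cdot}{T}=\varkappa N$, $\overset{\cdot}{B}=-\varkappa H\,N$, the $N$-component cancels (this is exactly where $\tau-\tau_{G}=\varkappa H$ enters), while the $T$- and $B$-components assemble into $\dfrac{H'}{(1+H^{2})^{3/2}}\bigl(T-HB\bigr)\dfrac{ds}{d\overline{s}}=\dfrac{H'}{1+H^{2}}N_{\beta}\dfrac{ds}{d\overline{s}}$, using that $T-HB=\sqrt{1+H^{2}}\,N_{\beta}$. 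Comparing with $\varkappa_{\beta}N_{\beta}$ and replacing $\dfrac{ds}{d\overline{s}}$ by $\dfrac{1}{\lambda\varkappa H\sqrt{1+H^{2}}}$ gives $\varkappa_{\beta}=\dfrac{H'}{\lambda\varkappa H(1+H^{2})^{3/2}}$, the first asserted formula.

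For the torsion I would use $D_{T_{\beta}}N_{\beta}=-\varkappa_{\beta}T_{\beta}+\tau_{\beta}B_{\beta}$ together with $B_{\beta}=N$, so that $\tau_{\beta}=\langle D_{T_{\beta}}N_{\beta},N\rangle$. Writing $D_{T_{\beta}}N_{\beta}=\overset{\cdot}{N}_{\beta}+\tfrac12[T_{\beta},N_{\beta}]=\overset{\cdot}{N}_{\beta}+\tau_{G_{\beta}}B_{\beta}$, the bracket term contributes $\langle\tau_{G_{\beta}}B_{\beta},N\rangle=\tau_{G_{\beta}}$, while differentiating $N_{\beta}$ componentwise and using $\overset{\cdot}{T}=\varkappa N$, $\overset{\cdot}{B}=-\varkappa H\,N$ shows that the $N$-component of $\overset{\cdot}{N}_{\beta}$ equals $\dfrac{\varkappa(1+H^{2})}{\sqrt{1+H^{2}}}\dfrac{ds}{d\overline{s}}=\varkappa\sqrt{1+H^{2}}\,\dfrac{ds}{d\overline{s}}$; by the Mannheim relation $\lambda\varkappa(1+H^{2})=1$ this equals $\dfrac{1}{\lambda H}$. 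Adding the two contributions gives $\tau_{\beta}=\dfrac{1}{\lambda H}+\tau_{G_{\beta}}$, as claimed.

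Everything here is routine once the frame of Proposition~\ref{prop 3.1} is in hand; I expect no conceptual obstacle. The only points needing care are keeping the chain-rule factor $\dfrac{ds}{d\overline{s}}$ and the two distinct arc-length derivatives separate, and being careful to invoke the Lie-algebra bracket relations of Proposition~\ref{prop 2.1} for the frame $\{T_{\beta},N_{\beta},B_{\beta}\}$ of $\beta$ rather than for that of $\alpha$. Since the statement only asks for the expressions of $\varkappa_{\beta}$ and $\tau_{\beta}$, no converse argument is required; every simplification above collapses, after the substitution $\tau-\tau_{G}=\varkappa H$, onto the single identity $\lambda\varkappa(1+H^{2})=1$.
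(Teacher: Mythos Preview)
Your proposal is correct and follows essentially the same route as the paper: both differentiate the frame $\{T_{\beta},N_{\beta},B_{\beta}\}$ obtained in Proposition~\ref{prop 3.1} and read off $\varkappa_{\beta}$ and $\tau_{\beta}$ from the Serret--Frenet equations of $\beta$. The only cosmetic difference is in the torsion computation: the paper differentiates the relation $B_{\beta}=N$ and takes a norm, whereas you differentiate $N_{\beta}$ and project onto $B_{\beta}=N$; the two extractions of $\tau_{\beta}$ are equivalent and equally short.
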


\begin{proof}
If we differentiating the Eq. \eqref{3-12}\ and using the Frenet formulas, we
have%
\[
\varkappa_{\beta}N_{\beta}\lambda \varkappa H\sqrt{1+H^{2}}=\frac{H^{\shortmid
}}{\left(  1+H^{2}(s)\right)  ^{3/2}}\left(  T-HB\right)  .
\]
If we take the norm of last equation, we get%
\[
\varkappa_{\beta}=\frac{H^{\shortmid}}{\lambda \varkappa H\left(
1+H^{2}(s)\right)  ^{3/2}}.
\]
Since $\left(  \alpha,\beta \right)  $ is a Mannheim curve couple, we know
$B_{\beta}=N.$ If we differentiating this equation and using the Frenet
formulas, we have%
\[
-\left(  \tau_{\beta}-\tau_{G_{\beta}}\right)  N_{\beta}\lambda H\sqrt
{1+H^{2}}=-T+HB.
\]
If we take the norm of last equation, we get%
\[
\tau_{\beta}=\frac{1}{\lambda H}+\tau_{G_{\beta}}.
\]
This completes proof.
\end{proof}

%\dedicatory{This paper is dedicated to Professor ABCD}

\begin{theorem}
\label{teo 3.4}Let $\alpha:I\subset \mathbb{R\rightarrow}G$\ be an arc-lenghted
Mannheim curve and $\beta:\overline{I}\subset \mathbb{R\rightarrow}G$ be a
Mannheim mate of $\alpha.$ The Mannheim curve $\alpha$ is a slant helix if and
only if its Mannheim mate $\beta$ is a general helix.
\end{theorem}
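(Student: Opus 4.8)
The plan is to reduce the asserted equivalence to a single algebraic identity between the harmonic curvature function $H_{\beta}$ of the mate $\beta$ and the quantity $\sigma_{N}$ attached to the principal normal indicatrix of $\alpha$. First I would invoke the two characterizations already available: by Theorem~\ref{teo 2.1}, $\beta$ is a general helix if and only if its harmonic curvature function $H_{\beta}=(\tau_{\beta}-\tau_{G_{\beta}})/\varkappa_{\beta}$ is a constant function, and by Theorem~\ref{teo 2.2}, $\alpha$ is a slant helix if and only if $\sigma_{N}=\varkappa(1+H^{2})^{3/2}/H^{\shortmid}$ is a constant function. Hence it suffices to show that $H_{\beta}=\sigma_{N}$ at corresponding points under $\psi$.

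Next I would compute $H_{\beta}$ from the curvature relations already proved. Theorem~\ref{teo 3.3} gives $\varkappa_{\beta}=H^{\shortmid}/\bigl(\lambda\varkappa H(1+H^{2})^{3/2}\bigr)$ and $\tau_{\beta}=1/(\lambda H)+\tau_{G_{\beta}}$, while Proposition~\ref{prop 3.1} gives $\tau_{G_{\beta}}=\tau_{G}$, so that $\tau_{\beta}-\tau_{G_{\beta}}=1/(\lambda H)$. Forming the quotient, the factors $1/(\lambda H)$ cancel and one is left with
\[
H_{\beta}=\frac{\tau_{\beta}-\tau_{G_{\beta}}}{\varkappa_{\beta}}
=\frac{1}{\lambda H}\cdot\frac{\lambda\varkappa H(1+H^{2})^{3/2}}{H^{\shortmid}}
=\frac{\varkappa(1+H^{2})^{3/2}}{H^{\shortmid}}=\sigma_{N}.
\]
Therefore $H_{\beta}$ is constant precisely when $\sigma_{N}$ is constant, and combining this with the two characterizations recalled above yields exactly the statement ``$\beta$ is a general helix $\iff$ $\alpha$ is a slant helix,'' which completes the argument.

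The calculation itself is short; the part that needs the most care is the bookkeeping of nondegeneracy hypotheses so that every object used is well defined. One needs $\varkappa>0$ and $H=(\tau-\tau_{G})/\varkappa\neq0$ so that $\beta$ is a regular Frenet curve with unit tangent given by Eq.~\eqref{3-12}; one needs $H^{\shortmid}\neq0$ so that $\varkappa_{\beta}\neq0$ and the harmonic curvature $H_{\beta}$ of $\beta$ is defined (this is consistent with $\alpha$ being a slant helix, whose defining condition forces $H$ to be nonconstant); and $\lambda\neq0$ by Theorem~\ref{teo 3.2}. I would state these standing assumptions explicitly at the outset of the proof, after which the displayed chain of equalities is legitimate and the theorem follows.
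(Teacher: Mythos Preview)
Your proposal is correct and follows essentially the same route as the paper: both arguments use the curvature relations of Theorem~\ref{teo 3.3} to compute $H_{\beta}=(\tau_{\beta}-\tau_{G_{\beta}})/\varkappa_{\beta}=\varkappa(1+H^{2})^{3/2}/H^{\shortmid}=\sigma_{N}$ and then invoke the characterizations in Theorems~\ref{teo 2.1} and~\ref{teo 2.2}. Your explicit listing of the nondegeneracy assumptions ($\varkappa>0$, $H\neq0$, $H^{\shortmid}\neq0$, $\lambda\neq0$) is a welcome addition that the paper leaves implicit; note that the appeal to Proposition~\ref{prop 3.1} is not strictly needed, since the formula $\tau_{\beta}=1/(\lambda H)+\tau_{G_{\beta}}$ from Theorem~\ref{teo 3.3} already yields $\tau_{\beta}-\tau_{G_{\beta}}=1/(\lambda H)$ directly.
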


%\dedicatory{This paper is dedicated to Professor ABCD}

\begin{proof}
If Mannheim curve $\alpha$ is a slant helix, than we have Theorem
\ref{teo 2.2} $\sigma_{N}$ is a constant function. From Theorem \ref{teo 3.3}
for the curve $\beta,$ we have%
\begin{align*}
\frac{\tau_{\beta}-\tau_{G_{\beta}}}{\varkappa_{\beta}}  & =\frac{\frac
{1}{\lambda H}}{\frac{H^{\shortmid}}{\lambda \varkappa H\left(  1+H^{2}\right)
^{3/2}}}\\
\frac{\tau_{\beta}-\tau_{G_{\beta}}}{\varkappa_{\beta}}  & =\frac
{\varkappa \left(  1+H^{2}\right)  ^{3/2}}{H^{\shortmid}}\\
\frac{\tau_{\beta}-\tau_{G_{\beta}}}{\varkappa_{\beta}}  & =\sigma
_{N}=\text{constant.}%
\end{align*}
So $\beta$, which is Mannheim mate of $\alpha,$is a general helix.

Conversely, we assume that $\beta$, which is Mannheim mate of $\alpha,$ be a
general helix. So we have
\[
\frac{\tau_{\beta}-\tau_{G_{\beta}}}{\varkappa_{\beta}}=\text{constant.}%
\]
From last equation $\sigma_{N}$ is a constant function. This completes the proof.
\end{proof}

\begin{theorem}
\label{teo 3.5}Let $\alpha:I\subset \mathbb{R\rightarrow}G$\ be an arc-lenghted
Mannheim curve and $\beta:\overline{I}\subset \mathbb{R\rightarrow}G$ be a
Mannheim mate of $\alpha$. If $\alpha$ is a slant helix then the harmonic
curvature function of the curve $\alpha$: $H(s)$ is
\begin{equation}
H(s)=\frac{1}{2}\left(  ae^{bs}-\frac{1}{a}e^{-bs}\right) \label{3-13}%
\end{equation}
for some nonzero constant $a$ and $b$ and $s$ is the arc length parameter of
the curve $\alpha$. If we consider $a=b=1,$ we have the harmonic curvature
funtion of the curve $\alpha$ is hyperbolic sine function in arc length $s$,
that is., $H(s)=\sinh s$.
\end{theorem}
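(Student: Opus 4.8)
The plan is to combine the two characterizations already at hand and reduce everything to a single separable ODE for $H$. First I would invoke Theorem \ref{teo 2.2}: since $\alpha$ is a slant helix, the quantity
\[
\sigma_{N}=\frac{\varkappa(1+H^{2})^{3/2}}{H^{\shortmid}}
\]
is a (nonzero) constant, which I will call $c$. In particular $H^{\shortmid}$ is nowhere zero. Next I would use that $\alpha$ is a Mannheim curve, so by Theorem \ref{teo 3.2} we have $\lambda\varkappa(1+H^{2})=1$ with $\lambda$ constant, hence $\varkappa=\dfrac{1}{\lambda(1+H^{2})}$.

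Substituting this expression for $\varkappa$ into $\sigma_N=c$ gives
\[
c=\frac{1}{\lambda(1+H^{2})}\cdot\frac{(1+H^{2})^{3/2}}{H^{\shortmid}}
=\frac{\sqrt{1+H^{2}}}{\lambda H^{\shortmid}},
\]
so that $H^{\shortmid}=\dfrac{1}{\lambda c}\sqrt{1+H^{2}}$. Writing $b=\dfrac{1}{\lambda c}$ (a nonzero constant), this is the separable equation $\dfrac{dH}{\sqrt{1+H^{2}}}=b\,ds$. Integrating yields $\operatorname{arcsinh}H=bs+C$ for some constant $C$, i.e. $H(s)=\sinh(bs+C)$.

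Finally I would rewrite this in the desired form using $\sinh(bs+C)=\tfrac12\!\left(e^{C}e^{bs}-e^{-C}e^{-bs}\right)$; setting $a=e^{C}\neq 0$ gives exactly
\[
H(s)=\frac12\!\left(ae^{bs}-\frac{1}{a}e^{-bs}\right),
\]
and the special choice $a=b=1$ collapses to $H(s)=\sinh s$. I do not expect any real obstacle here: the only point requiring a word of care is the sign when passing between $H^{\shortmid}$ and $\sqrt{1+H^{2}}$ (absorbed into the sign of the constant $c$, equivalently of $b$), and noting that $H^{\shortmid}\neq 0$ so that $\sigma_N$ is genuinely defined; both are immediate once the hypotheses of Theorems \ref{teo 2.2} and \ref{teo 3.2} are in place.
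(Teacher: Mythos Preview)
Your argument is correct, and it is a genuinely different route from the paper's. The paper does not invoke Theorem~\ref{teo 2.2} at all; instead it returns to the \emph{definition} of a slant helix, differentiates $\langle N,X\rangle=\cos\theta$ twice, couples this with the Mannheim relation $\varkappa^{2}+(\tau-\tau_G)^{2}=\varkappa/\lambda$ to solve for $\langle T,X\rangle$ and $\langle B,X\rangle$ in terms of $H$ and $H'$, differentiates those once more to express $\varkappa$ and $\tau-\tau_G$ through $H,H',H''$, and finally obtains a \emph{second}-order ODE in $H$ (effectively $(1+H^{2})H''=H(H')^{2}$) whose general solution is the stated $\sinh$ form. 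Your approach bypasses all of this: by feeding the Mannheim identity $\varkappa=1/[\lambda(1+H^{2})]$ directly into the constant $\sigma_N$ from Theorem~\ref{teo 2.2}, you land immediately on the first-order separable equation $H'=b\sqrt{1+H^{2}}$, which integrates in one line. What the paper's approach buys is independence from the $\sigma_N$ characterization (it re-derives what is needed from scratch); what your approach buys is brevity and transparency, since the second-order ODE and its solution are replaced by an elementary $\operatorname{arcsinh}$ integral. Your remark about absorbing the sign of $H'$ into $b$ is exactly the bookkeeping needed, and nothing is missing.
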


\begin{proof}
Let $\alpha:I\subset \mathbb{R\rightarrow}G$\ be an arc-lenghted Mannheim curve
with Frenet apparatus $\left \{  T,N,B,\varkappa,\tau \right \}  $ in three
dimensional Lie group. Assume that $\alpha$ be slant helix, we have%
\begin{equation}
\left \langle N,X\right \rangle =\cos \theta,\text{ }\theta \neq \frac{\pi}%
{2}\label{3-14}%
\end{equation}
for left invariant vector field $X.$ Differentiating the Eq. \eqref{3-14}
twice, we have%
\begin{equation}
-\varkappa \left \langle T,X\right \rangle +\left(  \tau-\tau_{G}\right)
\left \langle B,X\right \rangle =0\label{3-15}%
\end{equation}
and%
\[
-\varkappa^{\shortmid}\left \langle T,X\right \rangle +\left(  \tau-\tau
_{G}\right)  ^{\shortmid}\left \langle B,X\right \rangle =\left \{  \varkappa
^{2}+\left(  \tau-\tau_{G}\right)  ^{2}\right \}  \left \langle N,X\right \rangle
.
\]
Since $\alpha$ is a Mannheim curve using the Theorem \ref{teo 3.2}, we rewrite
the last equation%
\begin{equation}
-\varkappa^{\shortmid}\left \langle T,X\right \rangle +\left(  \tau-\tau
_{G}\right)  ^{\shortmid}\left \langle B,X\right \rangle =\frac{\varkappa
}{\lambda}\cos \theta \label{3-16}%
\end{equation}
where $\lambda$ is a non-zero constant. By a direct calculation using the Eq.
\eqref{3-15} and the Eq. \eqref{3-16}, we obtain%
\begin{equation}
\left \langle T,X\right \rangle =\frac{H}{\lambda H^{\shortmid}}\cos
\theta \label{3-17}%
\end{equation}
and%
\begin{equation}
\left \langle B,X\right \rangle =\frac{1}{\lambda H^{\shortmid}}\cos
\theta.\label{3-18}%
\end{equation}
Differentiating the Eq. \eqref{3-17} and the Eq. \eqref{3-18}, we have%
\[
\varkappa=\frac{1}{\lambda}\left(  1-\frac{HH^{\shortmid \shortmid}}{\left(
H^{\shortmid}\right)  ^{2}}\right)  ,
\]%
\[
\tau-\tau_{G}=\frac{H^{\shortmid \shortmid}}{\lambda \left(  H^{\shortmid
}\right)  ^{2}},
\]
respectively. These equations give that%
\[
H=\frac{\tau-\tau_{G}}{\varkappa}=\frac{H^{\shortmid \shortmid}}{\left(
H^{\shortmid}\right)  ^{2}-HH^{\shortmid \shortmid}}.
\]
Then we have the following differential equation%
\[
\left(  1+H^{2}\right)  H^{\shortmid \shortmid}-\left(  H^{\shortmid}\right)
^{2}=0.
\]
Solving the last equation, we obtain the Eq. \eqref{3-13}. This completes the proof.
\end{proof}

\begin{theorem}
\label{teo 3.6}Let $\left(  \alpha,\beta \right)  $ be a Mannheim curve couple
in three dimensional Lie group with bi-invariant metric. Then $\alpha$ is
general helix if and only if $\beta$ is a geodesic.
\end{theorem}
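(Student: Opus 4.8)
The plan is to reduce the statement to the already-established formulas relating the Frenet data of $\alpha$ and its Mannheim mate $\beta$, together with the known characterization of general helices by constancy of the harmonic curvature function. First I would recall that, in a Lie group with bi-invariant metric, a curve $\beta$ is a geodesic precisely when its curvature $\varkappa_{\beta}$ vanishes identically (since $D_{T_{\beta}}T_{\beta}=\varkappa_{\beta}N_{\beta}$, so $\beta$ is a geodesic iff $D_{T_{\beta}}T_{\beta}\equiv 0$ iff $\varkappa_{\beta}\equiv 0$). This is the only interpretation of ``geodesic'' I would use.

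The key step is to invoke Theorem \ref{teo 3.3}, which gives, for a Mannheim curve couple $(\alpha,\beta)$,
\[
\varkappa_{\beta}\left(  \overline{s}\right)  =\frac{H^{\shortmid}\left(
s\right)  }{\lambda \varkappa(s)H(s)\left(  1+H^{2}(s)\right)  ^{3/2}},
\]
where $H$ is the harmonic curvature function of $\alpha$ and $\lambda$ is the (nonzero) constant from Theorem \ref{teo 3.1}. Since $\lambda\varkappa H(1+H^{2})^{3/2}$ never vanishes on the domain where the Mannheim construction is defined (we have $\varkappa\neq 0$ for a Frenet curve, $\lambda\neq 0$, and $H\neq 0$ is forced by $\lambda\varkappa H\sqrt{1+H^{2}}=d\overline{s}/ds\neq 0$ as recorded in the proof of Proposition \ref{prop 3.1}), this identity shows
\[
\varkappa_{\beta}\equiv 0 \iff H^{\shortmid}\equiv 0 \iff H \text{ is a constant function.}
\]

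Finally I would apply Theorem \ref{teo 2.1}: the curve $\alpha$ is a general helix if and only if its harmonic curvature function $H$ is constant. Chaining the two equivalences yields
\[
\alpha \text{ is a general helix} \iff H \text{ is constant} \iff \varkappa_{\beta}\equiv 0 \iff \beta \text{ is a geodesic,}
\]
which is the assertion. The only delicate point, and the one I would be most careful about, is justifying that the denominator $\lambda\varkappa H(1+H^{2})^{3/2}$ is nowhere zero, so that the division in Theorem \ref{teo 3.3} is legitimate and the equivalence $\varkappa_{\beta}\equiv 0\iff H^{\shortmid}\equiv 0$ really holds pointwise; everything else is a direct substitution into results already proved in the paper.
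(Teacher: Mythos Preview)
Your proposal is correct and follows essentially the same route as the paper: both arguments invoke Theorem~\ref{teo 3.3} to see that $\varkappa_{\beta}=0$ is equivalent to $H'=0$, and then Theorem~\ref{teo 2.1} to identify constancy of $H$ with $\alpha$ being a general helix. Your version is in fact slightly more careful, since you explicitly justify that the denominator $\lambda\varkappa H(1+H^{2})^{3/2}$ is nowhere zero (using $d\overline{s}/ds=\lambda\varkappa H\sqrt{1+H^{2}}\neq 0$), a point the paper leaves implicit.
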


\begin{proof}
If Mannheim curve $\alpha$ is a general helix, then its harmonic curvature $H$
is constant function. And so from Theorem \ref{teo 3.3},
\[
\varkappa_{\beta}=0.
\]
So, $\beta$ is a geodesic.

Conversely we assume that $\beta$ be a geodesic curve. From Theorem
\ref{teo 3.3} we can easily see that%
\[
H^{\shortmid}\left(  s\right)  =0
\]
and so%
\[
H(s)=\text{constant.}%
\]
This comletes the proof.
\end{proof}

\end{document}